\def\n{\nabla}
\def\F{{\mathcal F}}
\def\P{{\mathcal P}}
\def\I{{\mathcal I}}
\def\R{{\mathcal R}}
\def\S{{\mathcal S}}
\theoremstyle{plain} 
\newtheorem{theorem}{Theorem}[section]
\newtheorem{proposition}[theorem]{Proposition}
\newtheorem{corollary}[theorem]{Corollary}
\theoremstyle{definition}
\newtheorem{definition}[theorem]{Definition}
\newtheorem{lemma}[theorem]{Lemma}
\newtheorem{remark}[theorem]{Remark}
\theoremstyle{remark}
\newcommand{\bp}{\begin{proof}\;}
\newcommand{\ep}{\end{proof}}
\title{Projectively flat Finsler manifolds with infinite dimensional
  holonomy}
\author{Zolt\'an Muzsnay and P\'eter T. Nagy}
\date{Institute of Mathematics, University of Debrecen\\
  H-4010 Debrecen, Hungary, P.O.B. 12\\
  \bigskip {\small{\it E-mail}: {\tt {}muzsnay@science.unideb.hu}, {\tt
    {}petert.nagy@science.unideb.hu}}}
\begin{document}

\maketitle

\begin{abstract}
  Recently, we developed a method for the study of holonomy properties
  of non-Riemannian Finsler manifolds and obtained that the holonomy
  group can not be a compact Lie group, if the Finsler manifold of
  dimension $> 2$ has non-zero constant flag curvature. The purpose of
  this paper is to move further, exploring the holonomy properties of
  projectively flat Finsler manifolds of non-zero constant flag
  curvature.  We prove in particular that projectively flat Randers and Bryant-Shen
  manifolds of non-zero constant flag curvature have infinite
  dimensional holonomy group.
\end{abstract}

\footnotetext{2000 {\em Mathematics Subject Classification:} 53B40,
  53C29, 22E65} 

\footnotetext{{\em Key words and phrases:} Finsler geometry, holonomy,
  infinite-dimensional Lie groups.}  

\footnotetext{This research was supported by the Hungarian Scientific
  Research Fund (OTKA) Grant K 67617.}

\section{Introduction}

In recent papers \cite{Mu_Na}, \cite{MuNa} we have developed a method
for the study of holonomy properties of non-Riemannian Finsler
manifolds. Particularly, we obtained in \cite{Mu_Na}, that the holonomy
group cannot be a compact Lie group if the Finsler manifold of dimension
$> 2$ has non-zero constant flag curvature. We described the first
example of a Finsler manifold with infinite dimensional holonomy group,
namely a left invariant Berwald-Mo\'or metric on the 3-dimensional
Heisenberg group. 

The purpose of the present paper is to investigate families of Finsler
manifolds with interesting geometric structure wich have infinite
dimensional holonomy group. From the viewpoint of non-Euclidean geometry
the most important Riemann-Finsler manifolds are the projectively flat
spaces of constant flag curvature. We will turn our attention to
non-Riemannian projectively flat Finsler manifolds of non-zero constant
flag curvature. \par
There are many examples of non-Riemannian projectively flat Finsler
manifolds of non-zero constant flag curvature. Their classification is
related to the smooth version of Hilbert's Fourth Problem: characterize
(not necessarily reversible) distance functions on an open subset in
$\mathbb R^n$ such that straight lines are the shortest paths.  The
famous Beltrami's theorem states that the locally projectively flat
Riemannian manifolds are exactly the manifolds of constant
curvature. But for Finsler manifolds Beltrami's theorem is not true. In
fact, any projectively flat Finsler manifold has scalar flag curvature,
but there are Finsler manifolds of constant flag curvature, which are
not projectively flat. The constancy of the flag curvature is a very
restrictive property for complete projectively flat Finsler manifolds,
cf.  \cite{Fk1}, \cite{Fk2}, \cite{Fk3}, \cite{Br1}, \cite{Br2},
\cite{Br3}, but there are many non-complete examples defined on open
domains in $\mathbb R^n$, constructed and studied by Z. Shen,
cf. \cite{ChSh}, \cite{Shen1}, \cite{Shen2}, \cite{Shen3},
\cite{Shen4}.  \par
We will consider the following classes of locally projectively flat
non-Riemannian Finsler manifolds of non-zero constant flag curvature:
\vspace{-3pt}
\begin{enumerate}
\item \emph{Randers manifolds,} (cf. \cite{Shen3}), \vspace{-5pt}
\item \emph{manifolds having a $2$-dimensional subspace in the tangent
    space at some point, on which the Finsler norm is an Euclidean
    norm,} (cf. \cite{Shen4}, Theorem 1.2, p. 1715,) \vspace{-5pt}
\item \emph{manifolds having a $2$-dimensional subspace in the tangent
    space at some point, on which the Finsler norm and the projective
    factor are linearly dependent,} (cf. \cite{Shen4}, Example 7.1,
  pp. 1725 - 1727). 
\end{enumerate}
The first class consists of positively complete Finsler manifolds of
negative curvature, the second class contains a large family of (not
necessarily complete) Finsler manifolds of negative curvature, and the
third class contains a large family of not necessarily complete Finsler
manifolds of positive curvature. The metrics belonging to these classes
can be considered as (local) generalizations of a one-parameter family
of complete Finsler manifolds of positive curvature defined on $S^2$ by
R. Bryant in [Br1], [Br2] and on $S^n$ by Z. Shen in \cite{Shen4},
Example 7.1.  We prove that the holonomy group of Finsler manifolds
belonging to these classes and satisfying some additional technical
assumption is infinite dimensional.

\section{Preliminaries}

Throughout this article, $M$ denotes a $C^\infty$ manifold, ${\mathfrak
  X}^{\infty}(M)$ denotes the vector space of smooth vector fields on
$M$ and ${\mathsf {Diff}}^\infty(M)$ denotes the group of all
$C^\infty$-diffeomorphism of $M$ with the $C^\infty$-topology.

\subsubsection*{Spray manifold, horizontal distribution, canonical
  connection, parallelism}

A \emph{spray} on a manifold $M$ is a smooth vector field $\S$ on
$\hat T M := TM\setminus\! \{0\}$ expressed in a standard coordinate
system $(x^i,y^i)$ on $TM$ as
\begin{equation} 
  \label{spray}
  \S = y^i\frac{\partial}{\partial x^i} - 2 \Gamma^i(x,y) 
  \frac{\partial}{\partial y^i},
\end{equation}
where $\Gamma^i(x,y)$ are local functions on $TM$ satisfying 
\[\Gamma^i(x,\lambda y) = \lambda^2\Gamma^i(x,y),\quad \lambda>0.\] 
A manifold $M$ with a spray $\S$ is called a \emph{spray manifold} $(M,\S)$.\\
A curve $c(t)$ is called \emph{geodesic} if its coordinate functions
$c^i(t)$ satisfy the differential equations
\begin{equation} \label{geod}
\ddot{c}^i(t) + \Gamma^i(c(t),\dot{c}(t)) = 0,
\end{equation}
where the functions $\Gamma^i(x,y)$ 
are called the \emph{geodesic coefficients} the spray manifold $(M,\S)$.
The associated \emph{homogeneous (nonlinear) parallel translation}
$\tau_{c}:T_{c(0)}M\to T_{c(1)}M$ along a curve $c(t)$ is defined by
parallel vector fields $X(t)=X^i(t)\frac{\partial}{\partial x^i}$ along
$c(t)$ satisfying
\begin{equation} \label{eq:D} D_{\dot c} X (t):=\Big(\frac{d X^i(t)}{d
    t}+ \Gamma^i_j(c(t),X(t))\dot c^j(t)\Big) \frac{\partial}{\partial
    x^i} =0,\quad \text{where}\quad
  \Gamma^i_j=\frac{\partial\Gamma^i}{\partial y^j}.
\end{equation}
Let $(TM,\pi ,M)$ and $(TTM,\tau ,TM)$ denote the first and the second
tangent bundle of the manifold $M$, respectively. The \emph{horizontal
  distribution} ${\mathcal H}TM \!  \subset\! TTM$ associated to the
spray manifold $(M, \S)$ is the image of the horizontal lift which
is a vector space isomorphism $l_y\colon T_xM\to{\mathcal H}_yTM$ for
every $x\in M$ and $y\in T_xM$ defined by
\begin{equation} 
  \label{eq:lift} 
  l_y  \Big(  \frac{\partial}{\partial
    x^i}  \Big)=  \frac{\partial}{\partial x^i}
  -\Gamma_i^k(x,y)\frac{\partial}{\partial y^k}.
\end{equation}
Then a vector field $X(t)$ along a curve $c(t)$ is parallel if and only
if it is a solution of the differential equation
\begin{math}
  \frac{d}{dt}X(t)=l_{X(t)}(\dot c(t)).
\end{math}
\\[1ex]
If ${\mathcal V}TM \!  \subset\!  TTM$ is the vertical distribution on
$TM$ defined by ${\mathcal V}_yTM:=\mathrm{Ker} \, \pi_{*,y}$, then we
have the decomposition $T_yTM = {\mathcal H}_yTM \oplus {\mathcal
  V}_yTM$.
\\[1ex]
Let $(\hat{\mathcal V}TM,\tau,\hat TM)$ be the vertical bundle over
$\hat TM:= TM\setminus\!\{0\}$. We denote by $\hat{\mathfrak
  X}^{\infty}(TM)$ the vector space of smooth sections of the bundle
$(\hat{\mathcal V}TM,\tau,\hat T M)$.  The \emph{horizontal covariant
  derivative} of a section $\xi\in\hat{\mathfrak X}^{\infty}(TM)$ by a
vector field $X\in{\mathfrak X}^{\infty}(M)$ is given by $\nabla_X\xi :=
[h(X),\xi].$ We can express the horizontal covariant derivative of the
section $\xi(x,y)=\xi^i(x,y)\frac{\partial}{\partial y^i}$ by the vector
field $X(x) = X^i(x)\frac {\partial}{\partial x^i}$ as
\begin{equation}\label{covder}
  \nabla_X\xi = \left(\frac {\partial\xi^i(x,y)}{\partial x^j} - 
    \Gamma_j^k(x,y)\frac{\partial \xi^i(x,y)}{\partial y^k} + 
    \Gamma^i_{j k}(x,y)\xi^k(x,y)\right)X^j\frac {\partial}{\partial y^i},\end{equation}
where \[\Gamma^i_{j k}(x,y) := \frac{\partial \Gamma_j^i(x,y)}{\partial y^k}.\]
Let  $(\pi^{*}TM,\bar{\pi},\hat T M)$ be the pull-back bundle of $(\hat T M,\pi ,M)$ by the map $\pi:TM\to M$. 
Clearly, the mapping
\begin{equation} 
  \label{iden}
  (x,y,\xi^i\frac {\partial}{\partial
    y^i})\mapsto(x,y,\xi^i\frac {\partial}{\partial x^i})
  :\;\hat{\mathcal V}TM\rightarrow \pi^{*}TM
\end{equation} 
is a canonical bundle isomorphism.  In the following we will use the
isomorphism (\ref{iden}) for the identification of these bundles.\\[1ex]
The \emph{curvature tensor} field $K_{(x,y)} =
K^i_{jk}(x,y)dx^j\otimes dx^k\otimes \frac{\partial}{\partial x^i}$ on
the pull-back bundle $(\pi^{*}TM,\bar{\pi},\hat T M)$ of the spray manifold $(M,\S)$ 
in a local coordinate system is given by 
\begin{equation} \label{spraycurv}
  K^i_{jk}(x,y) =  \frac{\partial \Gamma^i_j(x,y)}{\partial x^k} -
  \frac{\partial \Gamma^i_k(x,y)}{\partial x^j} + \Gamma_j^m(x,y)
  \Gamma^i_{k m}(x,y) - \Gamma_k^m(x,y)
  \Gamma^i_{j m}(x,y). 
\end{equation}

\subsubsection*{Finsler manifold, canonical connection, Berwald
  connection}

A \emph{Finsler manifold} is a pair $(M,\mathcal F)$, where $M$ is an
$n$-dimensional smooth manifold and $\F\colon TM \to \mathbb{R}$ is a
continuous function, smooth on $\hat T M := TM\setminus\! \{0\}$, such
that its restriction ${\mathcal F}_x={\mathcal F}|_{_{T_xM}}$ for any
$x\in M$ is a positively 1-homogeneous function and the symmetric
bilinear form
\begin{displaymath}
  g_{x,y} \colon (u,v)\ \mapsto \ g_{ij}(x, y)u^iv^j=\frac{1}{2}
  \frac{\partial^2 \mathcal F^2_x(y+su+tv)}{\partial s\,\partial
    t}\Big|_{t=s=0}
\end{displaymath}
is positive definite at every $y\in \hat T_xM$.
\\[1ex]
We call $\F$ the Finsler function, $g_{x,y}$ the metric tensor of the
Finsler manifold $(M,\mathcal F)$.  The Finsler function is called
\emph{absolutely homogeneous} at $x\in M$, if $\F_x(\lambda
y)\!=\!|\lambda|\F_x(y)$. If $\F$ is absolutely homogeneous at every
$x\in M$, then the Finsler manifold $(M,\mathcal F)$ is
\emph{reversible}.
\\[1ex]
The associated spray is locally given by
$\S=y^i\frac{\partial}{\partial x^i} -
2G^i(x,y)\frac{\partial}{\partial y^i}$, where the functions
\begin{equation}\label{eq:G_i}
  G^i(x,y):= \frac{1}{4}g^{il}(x,y)\Big(2\frac{\partial g_{jl}}{\partial x^k}(x,y) -
  \frac{\partial g_{jk}}{\partial x^l}(x,y) \Big) y^jy^k
\end{equation}
are the \emph{geodesic coefficients} the Finsler manifold
$(M,\mathcal F)$.  The corresponding homogeneous (nonlinear) parallel
translation $\tau_{c}:T_{c(0)}M\to T_{c(1)}M$ along a curve $c(t)$ is
called the \emph{canonical homogeneous (nonlinear) parallel translation}
of the Finsler manifold $(M,\mathcal F)$. The horizontal covariant
derivative with respect to the spray associated to the Finsler manifold
$(M,\mathcal F)$ is called the \emph{horizontal Berwald covariant
  derivative}. If we define
\[\nabla_X  \phi= \left(\frac {\partial\phi}{\partial x^j} - 
  G_j^k(x,y)\frac{\partial \phi(x,y)}{\partial y^k}\right)X^j\] for a
smooth function $\phi:\hat TM\to \mathbb R$, the horizontal Berwald
covariant derivation (\ref{covder}) can be extended to the tensor bundle
over $(\pi^{*}TM,\bar{\pi},\hat T M)$.
\\[1ex]
The \emph{Riemannian curvature tensor} field $R_{(x,y)} =
R^i_{jk}(x,y)dx^j\otimes dx^k\otimes \frac{\partial}{\partial x^i}$ on
the pull-back bundle $(\pi^{*}TM,\bar{\pi},\hat T M)$ is
\begin{displaymath}
  R^i_{jk}(x,y) =  \frac{\partial G^i_j(x,y)}{\partial x^k} -
  \frac{\partial G^i_k(x,y)}{\partial x^j} + G_j^m(x,y)
  G^i_{k m}(x,y) - G_k^m(x,y)
  G^i_{j m}(x,y). 
\end{displaymath} 
The manifold is called of constant flag curvature $\lambda\in{\mathbb
  R}$, if for any $x\in M$ the local expression of the Riemannian
curvature is
\begin{equation}
  \label{gorb}
  R^i_{jk}(x,y) = \lambda\left(\delta_k^ig_{jm}(x,y)y^m - \delta_j^ig_{km}(x,y)y^m
  \right).
\end{equation}
In this case the flag curvature of the Finsler manifold
(cf.~\cite{ChSh}, Section 2.1 pp. 43-46) does not depend on the point,
nor on the 2-flag.
\\
A Finsler function $\F$ on an open subset $D \subset \mathbb R^n$ is
said to be \emph{projectively flat}, if all geodesic are straight lines in
$D$. A Finsler manifold is said to be \emph{locally projectively flat},
if at any point there is a local coordinate system $(x^i)$ in which $\F$
is projectively flat.
\\
Let $(x^1,\dots ,x^n)$ be a local coordinate system on $M$ corresponding
to the canonical coordinates of the Euclidean space which is
projectively related to $(M, \F)$. Then the geodesic coefficients
(\ref{eq:G_i}) are of the form
\begin{equation}
\label{eq:proj_flat_G_i}
  G^i(x,y) = \P(x,y)y^i, \quad  G^i_k = \frac{\partial
    \P}{\partial y^k}y^i + \P\delta^i_k,\quad G^i_{kl} = \frac{\partial^2
    \P}{\partial y^k\partial y^l}y^i + \frac{\partial \P}{\partial
    y^k}\delta^i_l + \frac{\partial \P}{\partial y^l}\delta^i_k. 
\end{equation}
where $\P$ is a 1-homogeneous function in $y$, called the projective factor 
of $(M,\F)$. Clearly, any $2$-plane in this coordinate system 
$(x^1,\dots ,x^n)$ is a totally geodesic submanifold of $(M,\F)$. 
\begin{remark}\label{linear} The canonical homogeneous parallel translation 
$\tau_{c}:T_{c(0)}M\to T_{c(1)}M$ in a locally projectively flat Finsler 
manifold $(M,\mathcal F)$ along curves $c(t)$ contained in the domain of the 
coordinate system $(x^1,\dots ,x^n)$ are linear maps if and only if the 
projective factor $\P(x,y)$ is a linear function in $y$. Hence the 
non-linearity in $y$ of the projective factor implies that the locally 
projectively flat Finsler manifold is non-Riemannian. 
\end{remark}

Projectively flat Randers manifolds with constant flag curvature were 
classified by Z. Shen in \cite{Shen3}. He proved that any projectively 
flat Randers manifold $(M, \F)$ with non-zero constant flag curvature has 
negative curvature. These metrics can be normalized by a constant factor  
so that the curvature is $-\frac14$. In this case $(M, \F)$  is isometric 
to the Finsler manifold defined by the metric function 
\begin{equation}
  \label{projective1} \F(x,y) = \frac{\sqrt{|y|^2 - \left(|x|^2|y|^2 - 
  \langle x,y\rangle^2\right)}}{1 - |x|^2}\pm\left(\frac{\langle x,y\rangle}
  {1 - |x|^2} + \frac{\langle a,y\rangle}{1 + \langle a,x\rangle}\right) 
\end{equation}
on the unit ball $\mathbb D^n \subset \mathbb R^n$, where $a\in \mathbb
R^n$ is any constant vector with $|a|<1$. According to Lemma 8.2.1 in
\cite{ChSh}, p.155, the projective factor $\P(x,y)$ can be computed by
the formula
\begin{displaymath}
  \P(x,y) = \frac{1}{2\F}\frac{\partial\F}{\partial x^i}y^i.
\end{displaymath}
An easy calculation yields
\begin{displaymath}
  \pm \frac{\partial\F}{\partial x^i}y^i = 
  \left(\! \frac{\sqrt{|y|^2 - \left(|x|^2|y|^2 - \langle
          x,y\rangle^2\right)} \pm \langle x,y\rangle}{1 -
      |x|^2}\right)^{\!\!2} - \left(\frac{\langle a,y\rangle}{1 + \langle
      a,x\rangle}\right)^{\!\!2},
\end{displaymath}
hence
\begin{equation}
  \label{projective2}
  \P(x,y) =  \frac12 \left(\!\!\frac{\pm\sqrt{|y|^2 - \left(|x|^2|y|^2 
          - \langle x,y\rangle^2\right)} + \langle x,y\rangle}{1 - |x|^2} 
    - \frac{\langle a,y\rangle}{1 + \langle a,x\rangle}\right).
\end{equation}

\subsubsection*{Holonomy group}

Let $(M,\mathcal F)$ be an $n$-dimensional Finsler manifold.  We denote
by $(\I M,\pi,M)$ the \emph {indicatrix bundle} of $(M,\mathcal F)$, the
\emph{indicatrix} $\I_xM$ at $x \in M$ is the compact hypersurface
\begin{displaymath}
  \I_xM:= \{y \in T_xM ; \ \mathcal F(y) = 1\},
\end{displaymath}
of $T_xM$ diffeomorphic to the standard $(n-1)$-sphere.
\\[1ex]
The homogeneous (nonlinear) parallel translation $\tau_{c}:T_{c(0)}M\to
T_{c(1)}M$ along a curve $c:[0,1]\to \R$ preserves the value of the
Finsler function, hence it induces a map $\tau_{c}\colon \I_{c(0)}M
\longrightarrow \I_{c(1)}M$
between the indicatrices. \\[1ex]
The group of diffeomorphisms ${\mathsf{Diff}^{\infty}}({\I}_xM)$ of the
indicatrix ${\I}_xM$ is a regular infinite dimensional Lie group modeled
on the vector space ${\mathfrak X}^{\infty}({\I}_xM)$. In this category
of groups one can define the exponential mapping and the group structure
is locally determined by the Lie algebra. The Lie algebra of
${\mathsf{Diff}^{\infty}}({\I}_xM)$ is ${\mathfrak X}^{\infty}({\I}_xM)$
equipped with the negative of the usual Lie bracket.
\\[1ex]
The \emph{holonomy group} $\mathsf{Hol}_x(M)$ of a Finsler space
$(M,\F)$ at a point $x\in M$ is the subgroup of the group of
diffeomorphisms ${\mathsf{Diff}^{\infty}}({\I}_xM)$ of the indicatrix
${\I}_xM$ generated by (nonlinear) parallel translations of ${\I}_xM$
along piece-wise differentiable closed curves initiated at the point
$x\in M$.
\\[1ex]
Clearly, the holonomy groups at different points of $M$ are isomorphic.

\subsubsection*{Infinitesimal holonomy algebra} 

A vector field $\xi\in {\mathfrak X}^{\infty}({\I}M)$ on the indicatrix
bundle $\I M$ is a \emph{curvature vector field} of the Finsler manifold
$(M, \F)$, if there exist vector fields $X, Y\in {\mathfrak
  X}^{\infty}(M)$ on $M$ such that $\xi = R(X,Y)$.
\\[1ex]
If $x\in M$ is fixed and $X, Y\in T_xM$, then the vector field $y\to
R(X,Y)(x,y)$ on ${\I}_x M$ is a \emph{curvature vector field at $x$}
(see \cite{Mu_Na}).\\[1ex]
The Lie algebra $\mathfrak{R}(M)$ of vector fields generated by the
curvature vector fields of $(M, \F)$ is called the \emph{curvature
  algebra} of the Finsler manifold $(M, \F)$. For a fixed $x\in M$ the
Lie algebra $\mathfrak{R}_x(M)$ of vector fields generated by the
curvature vector fields at $x$ is called the \emph{curvature algebra
  at  $x$}.\\[1ex]
The \emph{infinitesimal holonomy algebra} of the Finsler manifold $(M,
\mathcal F)$ is the smallest Lie algebra $\mathfrak{hol}^{*}(M)$ of
vector fields on the indicatrix bundle $\I M$ satisfying the properties
\begin{enumerate}
\item[(i)] any curvature vector field $\xi$ belongs to
  $\mathfrak{hol}^{*}(M)$, \vspace{-5pt}
\item[(ii)] if $\xi, \eta\in \mathfrak{hol}^{*}(M)$ then
  $[\xi,\eta]\in\mathfrak{hol}^{*}(M)$, \vspace{-5pt}
\item[(iii)] if $\xi\in\mathfrak{hol}^{*}(M)$ and $X\in {\mathfrak
    X}^{\infty}(M)$ then the horizontal Berwald covariant derivative
  $\nabla_{\!\! X}\xi$ also belongs to
  $\mathfrak{hol}^{*}(M)$. \vspace{-5pt}
\end{enumerate} 
The \emph{infinitesimal holonomy algebra at a point $x\in M$} is the the
Lie algebra
\begin{displaymath}
  \mathfrak{hol}^{*}_x(M) \! := \! 
  \big\{\, \xi \big|_{\I_xM} \ ; \ \xi\in\mathfrak{hol}^{*}(M) \, \big\}
  \subset {\mathfrak  X}^{\infty}({\I}_xM)
\end{displaymath}
of vector fields on the indicatrix $\I_x M$.
\\[1ex]
One has $\mathfrak{R}(M)\subset\mathfrak{hol}^{*}(M)$ and
$\mathfrak{R}_x(M)\subset\mathfrak{hol}^{*}_x(M)$ for any $x\in M$ (see
\cite{MuNa}).
\\[1ex]
Let $H$ be a subgroup of the diffeomorphism group
$\mathsf{Diff}^{\infty}(M)$ of a differentiable manifold $M$.
A vector field $X\!\in\!{\mathfrak X}^{\infty}(M)$ is called
\emph{tangent to} $H\subset\mathsf{Diff}^{\infty}(M)$ if there exists a
${\mathcal C}^1$-differentiable $1$-parameter family $\{\Phi(t)\in
H\}_{t\in\mathbb R}$ of diffeomorphisms of $M$ such that
$\Phi(0)=\mathsf{Id}$ and \( \frac{\partial\Phi(t)}{\partial
  t}\big|_{t=0}=X.\) A Lie subalgebra $\mathfrak g$ of ${\mathfrak
  X}^{\infty}(M)$ is called \emph{tangent
  to} $H$, if all elements of $\mathfrak g$ are tangent vector fields to $H$. \\[1ex]
A subgroup $H$ of the diffeomorphism group $\mathsf{Diff}^{\infty}(M)$
of a manifold $M$ will be called \emph{infinite dimensional}, if $H$ has
an infinite dimensional tangent
Lie algebra of vector fields. \\[1ex]
The following assertion will be an important tool in the next
discussion:
\\[1ex]
\emph{The infinitesimal holonomy algebra $\mathfrak{hol}^{*}(x)$ at any point 
$x\in M$ is tangent to the holonomy group $\mathsf{Hol}(x)$.}
(Theorem 6.3 in \cite{MuNa} 
Hence we have: 
\\[1ex]
\emph{If the infinitesimal holonomy algebra $\mathfrak{hol}^{*}(x)$ at a point 
$x\in M$ is infinite dimensional then the holonomy group $\mathsf{Hol}(x)$ is 
infinite dimensional.}

\section{Holonomy of projective Finsler surfaces of constant curvature}

A Finsler manifold $(M, \F)$ of dimension $2$ is called \emph{Finsler
surface}. In this case the indicatrix is 1-dimensional at any point $x\in M$, 
hence the curvature vector fields at $x\in M$ are proportional to any given 
non-vanishing curvature vector field. It follows that the curvature algebra 
$\mathfrak{R}_x(M)$ has a simple structure: it is at most 1-dimensional and 
commutative. Even in this case, the infinitesimal holonomy algebra
$\mathfrak{hol}^{*}_x(M)$ can be higher dimensional, or potentially
infinite dimensional. For the investigation of such examples we use a
classical result of S.~Lie on the classification of Lie group actions on
one-manifolds (cf.~\cite{Brou} or \cite{Olver}, pp.~58-62):
\\[1ex]
{\it If a finite-dimensional connected Lie group acts on a
  $1$-dimensional manifold without fixed points, than its dimension is
  less than $4$.}
\begin{proposition}
  \label{sec:prop-1}
  If the infinitesimal holonomy algebra $\mathfrak{hol}^{*}_x(M)$
  contains $4$ simultaneously non-vanishing $\mathbb R$-linearly
  independent vector fields, then the holonomy group $\mathsf{Hol}_x(M)$
  is an infinite dimensional subgroup of
  ${\mathsf{Diff}^{\infty}}({\I}_xM)$.
\end{proposition}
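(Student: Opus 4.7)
The plan is to argue by contradiction, combining the tangency of the infinitesimal holonomy algebra to the holonomy group with Lie's classification of group actions on $1$-manifolds quoted immediately above the proposition.

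Suppose, toward a contradiction, that $\mathsf{Hol}_x(M)$ is \emph{not} infinite-dimensional. By the definition recalled in the preliminaries, its tangent Lie algebra $\mathfrak{g}\subset\mathfrak{X}^{\infty}(\I_xM)$ is then finite-dimensional. By Theorem~6.3 of \cite{MuNa} (quoted at the end of the preliminaries), $\mathfrak{hol}^{*}_x(M)$ is tangent to $\mathsf{Hol}_x(M)$, hence $\mathfrak{hol}^{*}_x(M)\subset\mathfrak{g}$. Thus $\mathfrak{g}$ contains the four hypothesized $\mathbb{R}$-linearly independent vector fields having no common zero on $\I_xM$, giving in particular $\dim\mathfrak{g}\geq 4$.

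Next I would integrate $\mathfrak{g}$ to a genuine smooth action of a connected Lie group on $\I_xM$. Since $M$ is a Finsler surface, $\I_xM$ is diffeomorphic to $S^1$, hence compact, so every smooth vector field on it is complete. By the classical theorem of Palais on integration of finite-dimensional Lie algebras of complete vector fields, there exists a connected Lie group $G$ with $\mathrm{Lie}(G)=\mathfrak{g}$ acting smoothly on $\I_xM$ whose fundamental vector fields realize $\mathfrak{g}$. Because $G$ is connected, a point $p\in\I_xM$ is fixed by the whole group $G$ if and only if every element of $\mathfrak{g}$ vanishes at $p$; but the four distinguished vector fields already have no common zero, so no such $p$ exists and $G$ acts without fixed points. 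Lie's classification, in the form quoted just before the proposition, then forces $\dim G<4$, contradicting $\dim G=\dim\mathfrak{g}\geq 4$, and the proposition follows.

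The main technical obstacle is the passage from the abstract finite-dimensional tangent Lie algebra $\mathfrak{g}$ to a bona fide connected Lie group action on $\I_xM$ to which Lie's theorem can be applied. Compactness of the indicatrix circle disposes of completeness of the individual vector fields, so what remains is the standard Palais integration of a finite-dimensional Lie algebra of complete vector fields; once this is in place, the translation of ``no common zero of the four vector fields'' into ``no global fixed point of $G$'' is immediate, and Lie's classification closes the argument.
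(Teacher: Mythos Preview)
Your argument is correct and follows essentially the same strategy as the paper's proof: assume finite-dimensionality, note that the four simultaneously non-vanishing tangent vector fields force a fixed-point-free action, and invoke Lie's classification to obtain the contradiction $\dim<4$. The paper's proof is considerably terser---it simply asserts that the holonomy group acts without fixed points on the $1$-dimensional indicatrix and applies Lie's theorem directly---whereas you supply the intermediate step of integrating the finite-dimensional tangent Lie algebra to a connected Lie group action via Palais' theorem, which makes the application of Lie's classification (stated for connected Lie groups) cleaner and more rigorous.
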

\begin{proof} 
  Indeed, in this case the holonomy group acts on the 1-dimensional
  indicatrix without fixed points.  If it would be finite-dimensional
  then its dimension and the dimension of its Lie algebra should
  be less than $4$, hence the assertion is proved.
\end{proof}
Let $(M, \F)$ be a locally projectively flat Finsler surface of non-zero 
constant curvature, let $(x^1,x^2)$ be a local coordinate system
centered at $x\in M$, corresponding to the canonical coordinates of the
Euclidean space which is projectively related to $(M, \F)$ and let
$(y^1,y^2)$ be the induced coordinate system in the tangent plane
$T_xM$. 

In the sequel we identify the tangent plane $T_xM$ with $\mathbb R^2$
with help of the coordinate system $(y^1, y^2)$. We will use the
euclidean norm $||(y^1, y^2)|| = \sqrt{(y^1)^2+(y^2)^2}$ of $\mathbb
R^2$ and the corresponding polar coordinate system $(e^r,t)$, too.

Let $\varphi(y^1,y^2)$ be a positively 1-homogeneous function on
$\mathbb R^2$ and let $r(t)$ be the $2\pi$-periodic smooth function
$r:\mathbb R \to\mathbb R$ determined by
\begin{equation}
  \label{varphi}
  \varphi(e^{r(t)}\!\cos t,e^{r(t)}\!\sin t)=1\quad \text{or} \quad
  \varphi(y^1,y^2) = e^{-r(t)}\sqrt{(y^1)^2\!+\!(y^2)^2},  \end{equation}
where\;
\begin{math}
  \cos t \!= \!\frac{y^1}{\sqrt{(y^1)^2+(y^2)^2}},
\end{math}
\;
\begin{math}
  \sin t \!=\!  \frac{y^2}{\sqrt{(y^1)^2+(y^2)^2}},
\end{math}
and $\tan t \!=\! \frac{y^2}{y^1}$, i.e. the level set $\{\varphi(y^1\!,y^2)\equiv1\}$ of the
1-homogeneous function $\varphi$ in $\mathbb R^2$ is given by the the parametrized
curve $t\to (e^{r(t)}\cos t,e^{r(t)}\sin t).$  

Since the curvature $\kappa$ of a smooth curve \ $t\to (e^{r(t)}\cos t,e^{r(t)}\sin t)$ in
  $\mathbb R^2$ is   
  \begin{equation}\label{kappa}
    \kappa = - \frac{e^r}{\sqrt{{\dot r}^2 + 1}}(\ddot r - \dot r^2 - 1 ),
  \end{equation} 
  the vanishing of the expression $\ddot
r - \dot r^2 - 1$ means the infinitesimal linearity of the corresponding
positively homogeneous function in $\mathbb R^2$.

\begin{definition}
  Let $\varphi(y^1,y^2)$ be a positively 1-homogeneous function on
  $\mathbb R^2$ and let $\kappa(t)$ be the curvature of the curve $t\to
  (e^{r(t)}\cos t,e^{r(t)}\sin t)$ defined by the equations
  (\ref{varphi}). We say that $\varphi(y^1,y^2)$ is \emph{strongly
    convex}, if $\kappa(t) \neq 0$ for all $t\in \mathbb R$.
\end{definition}
Conditions (A), (B), (C) in the following theorem imply that the projective factor $\P$ 
at ${x_0}\in M$ is a non-linear function, and hence, according to Remark \ref{linear}, 
$(M, \F)$ is a non-Riemannian Finsler manifold.

\begin{theorem} 
  \label{prop:proj_flat_const_curv} 
  Let $(M, \F)$ be a projectively flat Finsler surface of non-zero constant curvature. 
  Assume that there exists a point $x_0\in M$ such that one of the following conditions hold 
  \begin{enumerate}
  \item[\emph{(A)}] $\F$ induces a scalar product on $T_{x_0}M$ and the
    projective factor $\P$ at $x_0$ is a strongly convex positively
    1-homogeneous function,
  \item[\emph{(B)}] $\F(x_0,y)$ is a strongly convex absolutely
    1-homogeneous function on $T_{x_0}M$, and the projective factor
    $\P({x_0},y)$ on $T_{x_0}M$ satisfies
    $\P({x_0},y)=c\!\cdot\!\F(x_0,y)$ with $0\neq c\in\mathbb R$,
\item[\emph{(C)}] there is a projectively related Euclidean coordinate
  system of $(M, \F)$ centered at ${x_0}$ and one has
  \begin{equation}
    \label{rand} 
    \F(0,y) = |y| \pm \langle a,y\rangle\quad \text{and}\quad \P(0,y) =  
    \frac12\left(\pm |y| - \langle a,y\rangle\right).\end{equation}
  \end{enumerate}
  Then the holonomy group $\mathsf{Hol}_{x_0}(M)$ is 
  infinite  dimensional.
\end{theorem}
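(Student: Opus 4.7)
The strategy is to reduce matters to Proposition~\ref{sec:prop-1}: it suffices to exhibit, inside the infinitesimal holonomy algebra $\mathfrak{hol}^{*}_{x_0}(M)$, four $\mathbb R$-linearly independent vector fields on the indicatrix $\I_{x_0}M$ that have no common zero. By property (iii) in the definition of $\mathfrak{hol}^{*}(M)$, every iterated horizontal Berwald covariant derivative of a curvature vector field belongs to $\mathfrak{hol}^{*}_{x_0}(M)$, so the natural candidates are the basic curvature vector field $\xi_0:=R(\partial/\partial x^1,\partial/\partial x^2)|_{x_0}$ and its iterates $\xi_{i_1\dots i_k}:=\nabla_{\!\partial/\partial x^{i_1}}\!\cdots\nabla_{\!\partial/\partial x^{i_k}}\xi_0$ restricted to $\I_{x_0}M$.

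First I would fix a projectively flat chart $(x^1,x^2)$ centered at $x_0$ and substitute the formulas (\ref{eq:proj_flat_G_i}) for $G^i,G^i_j,G^i_{jk}$ into (\ref{spraycurv}) to obtain $R^i_{jk}(x_0,y)$ as an explicit polynomial expression in $\P(x_0,y)$, its $y$-derivatives and the first $x$-derivatives of $\P$ at $x_0$. The constant-flag-curvature hypothesis (\ref{gorb}) then identifies this expression with $\lambda(\delta^i_k g_{jm}y^m-\delta^i_j g_{km}y^m)$, which in particular expresses the Berwald metric $g_{ij}(x_0,y)$ algebraically in terms of $\P(x_0,y)$ and the first $x$-derivatives of $\P$ at $x_0$. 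Applying (\ref{covder}) repeatedly, each $\xi_{i_1\dots i_k}|_{x_0}$ becomes a vector field on $\I_{x_0}M$ computable solely from the $1$-jet data $(\F(x_0,\cdot),\P(x_0,\cdot))$ and the curvature constant.

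Next, using the polar parametrization $t\mapsto(e^{r(t)}\cos t,e^{r(t)}\sin t)$ of $\I_{x_0}M\subset\mathbb R^2$ introduced in (\ref{varphi}), I would rewrite each $\xi_{i_1\dots i_k}|_{x_0}$ in the form $f_{i_1\dots i_k}(t)\,\frac{\partial}{\partial t}$. The problem then reduces to producing four of these coefficient functions that are $\mathbb R$-linearly independent and share no common zero on $\mathbb R/2\pi\mathbb Z$. The analytic input is the strong-convexity hypothesis, which by (\ref{kappa}) amounts to $\ddot r-\dot r^2-1\neq 0$ for the relevant $1$-homogeneous function; this non-vanishing is what will eventually rule out common zeros and nontrivial linear relations among the $f_{i_1\dots i_k}$.

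The main bulk of work is the case-by-case verification. In case (A) the Euclidean form of $\F(x_0,\cdot)$ reduces $f_0$ to a simple trigonometric expression, while the coefficients of $\xi_1,\xi_2$ and one second-order Berwald derivative $\xi_{11}$ become polynomials in $\dot r_{\!\P}$ and $\ddot r_{\!\P}$ whose leading part is a nonzero multiple of $\ddot r_{\!\P}-\dot r_{\!\P}^{\,2}-1$; strong convexity of $\P$ then forbids any common zero and any nontrivial $\mathbb R$-linear relation among $f_0,f_1,f_2,f_{11}$. Case (B) proceeds along the same lines with the roles of $\F$ and $\P$ partly interchanged, using the proportionality $\P=c\F$ to close the Berwald derivatives on a four-dimensional span where the strong convexity of $\F$ plays the role previously played by that of $\P$. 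Case (C) is the Randers case: both $\F(0,y)$ and $\P(0,y)$ are given explicitly in (\ref{rand}), so $f_0,f_1,f_2,f_3$ can be computed directly and the required independence and non-vanishing checked by an elementary calculation. The main obstacle I expect is the organizational one in (A) and (B): choosing the four $\mathbb R$-linear combinations of $\xi_{i_1\dots i_k}$ so that, after the change to the polar frame on $\I_{x_0}M$, the non-vanishing and independence of the resulting coefficient functions visibly reduce to the single inequality $\ddot r-\dot r^2-1\neq 0$ supplied by the strong convexity assumption.
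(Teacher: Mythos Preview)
Your overall plan matches the paper's: take $\xi=R(\partial_1,\partial_2)$ and its Berwald covariant derivatives as the four candidates and test independence in the polar parametrization of $\I_{x_0}M$. What you are missing is the structural observation that dissolves the ``organizational obstacle'' you anticipate. Because the flag curvature is constant one has $\n_W R=0$, so $\n_W\xi=R\big(\n_W(U\wedge V)\big)$; in the projectively flat chart (\ref{eq:proj_flat_G_i}) gives $\n_k(\partial_1\wedge\partial_2)=(G^1_{k1}+G^2_{k2})\,\partial_1\wedge\partial_2=3\,(\partial\P/\partial y^k)\,\partial_1\wedge\partial_2$. Hence every iterated derivative $\n_{i_1}\cdots\n_{i_k}\xi$ is a \emph{scalar multiple} of the single nowhere-vanishing curvature vector field $\xi$. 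Two consequences follow. First, the ``simultaneously non-vanishing'' hypothesis of Proposition~\ref{sec:prop-1} is automatic, since the coefficient in front of $\xi$ itself is the constant $1$; strong convexity is not what rules out common zeros. Second, the independence of $\xi,\n_1\xi,\n_2\xi,\n_1\n_2\xi$ reduces to the $\mathbb R$-linear independence of the four \emph{scalar} functions $1,\ \partial\P/\partial y^1,\ \partial\P/\partial y^2,\ 2(\partial\P/\partial y^1)(\partial\P/\partial y^2)-\lambda g_{12}$, where the last expression arises after Lemma~8.2.1 of Chern--Shen is used to eliminate $\partial^2\P/\partial x^1\partial y^2$ in favor of $0$-jet data of $\P$ and $\F$ at $x_0$. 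This is exactly the clean reduction you were groping for.

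One further correction for case~(B): the argument does not reduce to the single inequality $\ddot r-\dot r^2-1\neq 0$. The paper also uses the $\pi$-periodicity of $r(t)$ coming from the \emph{absolute} homogeneity of $\F(x_0,\cdot)$ to split the putative linear relation into its parts odd and even under $t\mapsto t+\pi$, and then evaluates at a point where $\dot r=0$ to kill the remaining coefficients; strong convexity enters only at the final step, to show $1-\ddot r(0)\neq 0$.
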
 
\begin{proof}
  Let $(M, \F)$ be a Finsler surface of constant flag curvature covered
  by a coordinate system $(x^1,x^2)$. Assume that the vector fields
  $U\!=\!U^i\frac{\partial}{\partial x^i}$,
  $V\!=\!V^i\frac{\partial}{\partial x^i}\in {\mathfrak X}^{\infty}(M)$
  have constant coordinate functions.  Let $\xi = R(U,V)$ be the
  corresponding curvature vector field. \\
  Since $(M, \F)$ is of constant flag curvature, we can write
  \[R^i_{jk}(x,y) = \lambda\left(\delta_j^ig_{km}(x,y)y^m -
    \delta_k^ig_{jm}(x,y)y^m \right), \quad \text{with}\quad \lambda =
  const.\] It is well known that the horizontal Berwald covariant
  derivative $\nabla_WR$ of the tensor field $R =
  R^i_{jk}(x,y)dx^j\wedge dx^k\frac{\partial}{\partial x^i}$
  vanishes. Indeed, Lemma 6.2.2, p. 85 in \cite {Shen1} yields
  $\nabla_wg_{(x,y)}(u,v) = -2 L(u,v,w)$ for any $u,v,w\in
  T_xM$. Moreover $\nabla_W y = 0$,\; $\nabla_W \text{Id}_{TM} = 0$ for
  any vector field $W\in{\mathfrak X}^{\infty}(M)$, and
  $L_{(x,y)}(y,v,w)= 0$ (cf. equation 6.28, p. 85 in
  \cite{Shen1}). Hence we obtain $\nabla_WR = 0$.
  \\
  Since the curvature tensor field is skew-symmetric, $R_{(x,y)}$ acts
  on the one-dimensional wedge product $T_xM\wedge T_xM$. The covariant
  derivative $\n_W\xi$ of the curvature vector field
  \begin{math}
    \xi \!= \!R(U,V) \!=\! \frac12 R(U\otimes V \!-\! V\otimes U) \!=\!
    R(U\!\wedge\! V)
  \end{math}
  can be written in the form
  \begin{displaymath}
    \n_W\xi = R\left(\n_W(U\wedge V)\right) = R(\n_WU\wedge V+U\wedge
    \n_WV).
  \end{displaymath}
  We have
  \begin{math}
    U\wedge V = \frac12 \left(U^1V^2-U^2V^1\right)
    \frac{\partial}{\partial x^1}\wedge\frac{\partial}{\partial x^2}
  \end{math}
  and hence
  \begin{equation}
    \label{dercurv} 
    \n_W\xi = (U^1V^2-V^1U^2) W^k R\Big(\n_k\left(\tfrac{\partial}{\partial x^1} 
      \wedge\tfrac{\partial}{\partial x^2}\right)\Big),
  \end{equation}
  where $\n_k\xi:=
  \n_{\!\!\!\frac{\partial}{\partial x^k}}\xi$.  Since
  \begin{alignat*}{1}
    \n_k\left(\tfrac{\partial}{\partial x^1}
      \wedge\tfrac{\partial}{\partial x^2}\right)& =
    \left(\n_k\tfrac{\partial}{\partial x^1}\right)
    \wedge\tfrac{\partial}{\partial x^2} + \tfrac{\partial}{\partial
      x^1} \wedge\left(\n_k\tfrac{\partial}{\partial x^2}\right) =
    \\
    & = G^l_{k1}\tfrac{\partial}{\partial x^l}
    \wedge\tfrac{\partial}{\partial x^2} + \tfrac{\partial}{\partial
      x^1} \wedge G^m_{k2}\tfrac{\partial}{\partial x^m} =
    \left(G^1_{k1} + G^2_{k2}\right)\tfrac{\partial}{\partial x^1}
    \wedge\tfrac{\partial}{\partial x^2}
  \end{alignat*}
  we obtain
  \begin{displaymath}
    \n_W\xi = \left(G^1_{k1} + G^2_{k2}\right)W^k R(U,V)= \left(G^1_{k1}
      + G^2_{k2}\right)W^k\xi. 
  \end{displaymath}
  Since the geodesic coefficients are given by (\ref{eq:proj_flat_G_i}) 
  we have 
  \begin{equation}
  \label{n1xi}
  \n_W\xi = G^{m}_{km}W^{k}\xi = 
  3\frac{\partial \P}{\partial y^{k}}W^{k}\xi.
\end{equation}
Hence 
\begin{displaymath}
  \n_Z\left(\n_W\xi\right) = 3\n_Z\left(\frac{\partial \P}
    {\partial y^{k}}W^{k}\xi\right) = 3\left\{\n_Z\left(\frac{\partial \P}
      {\partial y^{k}}W^{k}\right)\xi + \left(\frac{\partial \P}
      {\partial y^{k}}W^{k}\right)\left(\frac{\partial \P}
      {\partial y^{l}}Z^{l}\right)\right\}\xi.
\end{displaymath}
Let $W$ be a vector field with constant coordinate functions. Then,
using (\ref{eq:proj_flat_G_i}) we get
\begin{displaymath}
  \nabla_Z \left(\frac{\partial \P}{\partial y^{k}}W^{k}\right)
  = \left(\frac {\partial^2 \P}{\partial x^{j}\partial y^{k}} 
    - G_j^{m}\frac{\partial^2 \P}{\partial y^{m}\partial y^{k}}
    \! \right)W^{k}Z^{j} 
  = \left(\frac{\partial^2 \P}{\partial x^{j}\partial y^{k}} -
    \P\frac{\partial^2 \P}{\partial y^{k}\partial
      y^{j}}\right)W^{k}Z^{j},
\end{displaymath}
and hence 
\begin{equation}
  \label{n2xi}
  \n_Z\left(\n_W\xi\right) = 
  3\left\{\frac {\partial^2 \P}{\partial x^{j}\partial y^{k}} - 
    \P\frac{\partial^2 \P}{\partial y^{k}\partial y^{j}} + \frac{\partial \P}
    {\partial y^{k}} \frac{\partial \P}{\partial y^{j}}\right\} 
  W^{k}Z^{j}\xi.
\end{equation}
Let $x_0\in M$ be the point with
  coordinates   $(0,0)$ in the local coordinate system of $(M, \F)$ 
  corresponding to the canonical coordinates of the projectively related 
  Euclidean plane. 
  According to Lemma 8.2.1 in \cite{ChSh}, p.155, we have
  \begin{displaymath}
    \frac {\partial^2 \P}{\partial x^1\partial y^2} - 
    \P\frac{\partial^2 \P}{\partial y^1\partial y^2} + \frac{\partial \P}
    {\partial y^1}\frac{\partial \P}{\partial y^2} = 2\frac{\partial \P}
    {\partial y^1}\frac{\partial \P}{\partial y^2} -
    \frac{l}{2}\frac{\partial^2 \F^2}{\partial y^1\partial y^2} =
    2\frac{\partial \P} {\partial y^1}\frac{\partial \P}{\partial y^2} -
    \lambda\;g_{12}.
  \end{displaymath}
  Hence  the vector fields $\xi\big|_{x_0}$, $\n_1\xi|_{x_0}$, $\n_2\xi|_{x_0}$
  and $\n_1\left(\n_2\xi\right)|_{x_0}$ are linearly independent if and
  only if the functions
  \begin{equation}
    \label{indep}
    \!1,\qquad \!\frac{\partial \P}{\partial y^1}\Big|_{x_0},\qquad 
    \! \frac{\partial \P}{\partial y^2}\Big|_{x_0},\qquad 
    \left(2\frac{\partial \P} {\partial y^1}
    \frac{\partial \P}{\partial y^2} -
    \lambda\;g_{12}\right)\Big|_{x_0}
  \end{equation}
  are linearly independent, where $g_{12}=g_y(\frac{\partial}{\partial
    x^1}, \frac{\partial}{\partial x^2})$ is the component of the metric
  tensor of $(M,\F)$.

\begin{lemma} 
  \label{expr}
  The functions\; $\frac{\partial \P(0,y)}{\partial y^1}$,
  $\frac{\partial \P(0,y)}{\partial y^2}$ and $\P(0,y)\frac{\partial
    ^2\P(0,y)}{\partial y^1\partial y^2}$ can be expressed in the polar
  coordinate system $(e^r,t)$ by 
  \[\frac{\partial\P(0,y)}{\partial y^1} \!=\! (\cos t + \dot r \sin t)e^{-r }\!,
    \quad     
    \frac{\partial\P(0,y)}{\partial y^2} \!=\! (\sin t - \dot r  \cos t)e^{-r}\!,\]
    \[\P(0,y)\frac{\partial ^2\P(0,y)}{\partial y^1\partial y^2}
    \!=\!  (\dot r^2 + 1-\ddot r )e^{-2r} \! \sin t \cos t ,\] where the
    dot refers to differentiation with respect to the variable $t$.
\end{lemma} 

\begin{proof} 
  We obtain from
  \begin{math}
    \frac{\partial e^{-r}}{\partial y^1} = - e^{-r} \dot r
    \frac{\partial t}{\partial y^1}
  \end{math}
  and from
  \begin{math}
    - \frac{y^2}{(y^1)^2} = \frac{\partial}{\partial
      y^1}(\frac{y^2}{y^1})\frac{d \tan t}{dt}\frac{\partial t}{\partial
      y^1} = \frac{1}{\cos^2 t}\frac{\partial t}{\partial y^1}
  \end{math}
  that
  \begin{math}
    \frac{\partial e^{-r}}{\partial y^1} = e^{-r}\dot r \cos^2 t
    \frac{y^2}{(y^1)^2} = e^{-r}\dot r\frac{y^2}{(y^1)^2 + (y^2)^2}.
  \end{math}
  Hence
  \begin{displaymath}
    \frac{\partial \P(0,y)}{\partial y^1} = 
    \frac{\partial \big(e^{-r}\sqrt{(y^1)^2+(y^2)^2}\big)}{\partial y^1} = 
    e^{-r}\Big(\dot r\frac{y^2}{\sqrt{(y^1)^2+(y^2)^2}} + 
    \frac{y^1}{\sqrt{(y^1)^2+(y^2)^2}}\Big).
  \end{displaymath}
  Similarly, we have
  \begin{math}
    \frac{\partial e^{-r}}{\partial y^2} = -
    e^{-r}\dot r \cos^2 t \frac{1}{y^1} = - e^{-r}\dot r\frac{y^1}{(y^1)^2
      + (y^2)^2}.
  \end{math}
  Hence
  \begin{displaymath}
    \frac{\partial \P(0,y)}{\partial y^2} = 
    \frac{\partial \big(e^{-r}\!\!\sqrt{(y^1)^2+(y^2)^2}\big)}{\partial y^2} 
    = e^{-r}\Big(-\dot r\frac{y^1}{\sqrt{(y^1)^2+(y^2)^2}} +
    \frac{y^2}{\sqrt{(y^1)^2+(y^2)^2}} \Big).
  \end{displaymath}
  Finally we have
  \begin{displaymath}
    \frac{\partial ^2\P(0,y)}{\partial y^1\partial y^2} = 
    \frac{\partial (\sin t - \dot r \cos t)e^{-r}}{\partial y^1} = 
    (\ddot r - \dot r^2 - 1 )e^{-r}\sin t \cos t \frac{1}{\sqrt{(y^1)^2+(y^2)^2}}. 
  \end{displaymath}
  Replacing $\varphi$ by the function $\P(0,y)$ in the expression (\ref{varphi}) 
  we get the assertion.
\end{proof}

\begin{lemma} 
  \label{non-linear} 
   Let $r:\mathbb R \to\mathbb R$ be a $2\pi$-periodic smooth function such that 
   the inequality $\ddot r(t) - \dot r^2(t) - 1\neq 0$ holds on a dense subset of 
   $\mathbb R$. Then the functions
  \begin{equation}\label{functions}
    1,\; (\cos t + \dot r \sin t)e^{-r},\; (\sin t - \dot r \cos
    t)e^{-r},\; (\cos t + \dot r \sin t)(\sin t - \dot r \cos t)e^{-2r}
  \end{equation}
  are linearly independent.
\end{lemma} 
\begin{proof} 
  The derivative of $(\cos t + \dot r \sin t)e^{-r}$ and of $(\sin t -
  \dot r \cos t)e^{-r}$ are $(\ddot r - \dot r^2 - 1)e^{-r}\sin t$ and
  $(\ddot r - \dot r^2 - 1)e^{-r}\cos t$, respectively, hence the
  functions (\ref{functions}) do not vanish identically.  Let us consider a
  linear combination
  \begin{displaymath}
    A + B (\cos t + \dot r \sin t)e^{-r } 
    + C (\sin t - \dot r \cos t)e^{-r } 
    +D(\cos t + \dot r \sin t)(\sin t - \dot r \cos t)e^{-2r} = 0
  \end{displaymath}
  with constant coefficients $A,B,C,D$. We differentiate and divide by 
  $e^{-t}(\ddot r - \dot r^2 - 1)$ and we have 
  \begin{displaymath}
    B \sin t - C \cos t - D (\cos 2t + \dot r \sin 2t)e^{-r}= 0.
  \end{displaymath}
  Putting $t = 0$ and $t = \pi$ we get $C = -De^{-r(0)} = De^{-r(\pi)}$. 
  Since $e^{-r(0)},\;e^{-r(\pi)} > 0$ we get $C = D = 0$   and hence 
  $A = B = C = D = 0$.
\end{proof}

Now, assume that condition (A) of Theorem
\ref{prop:proj_flat_const_curv} is fulfilled.  According to Proposition
\ref{sec:prop-1} if the functions (\ref{indep}) are linearly
independent, then the holonomy group $\mathsf{Hol}_{x_0}(M)$ is an
infinite dimensional subgroup of
${\mathsf{Diff}^{\infty}}({\I}_{x_0}M)$.  The function $\F({x_0},y)$
induces a scalar product on $T_{x_0}M$, consequently the component
$g_{12}$ of the metric tensor is constant on $T_{x_0}M$. Hence
$\mathsf{Hol}_{x_0}(M)$ is infinite dimensional if the functions
  \begin{equation}
    \label{functions_2}
    1, \qquad \frac{\partial \P}{\partial y^1}\Big|_{x_0},\qquad 
    \! \frac{\partial \P}{\partial y^2}\Big|_{x_0},\qquad 
    \frac{\partial \P} {\partial y^1}
    \frac{\partial \P}{\partial y^2}\Big|_{x_0} 
  \end{equation}
  are linearly independent. This follows from Lemma \ref{non-linear} 
  and hence the assertion of the theorem is true.\\[1ex]
  Assume that condition (B) is satisfied. We denote $\varphi(y) = \F(x_0,y)$. Using the expressions (\ref{indep}) 
  we obtain that the vector fields
  $\xi\big|_{x_0}$, $\n_1\xi|_{x_0}$, $\n_2\xi|_{x_0}$ and
  $\n_1\left(\n_2\xi\right)|_{x_0}$ are linearly independent if and only
  if the functions
  \begin{alignat*}{1}
    1,\qquad \!\frac{\partial\P}{\partial y^1}\Big|_{x_0} & =
    c\frac{\partial\varphi}{\partial y^1},\qquad \!
    \frac{\partial\P}{\partial y^2}\Big|_{x_0} =
    c\frac{\partial\varphi}{\partial y^2}
    \\
    \left(2\frac{\partial \P} {\partial y^1} \frac{\partial \P}{\partial
        y^2} - \lambda\;g_{12}\right)\Big|_{x_0} & = (2c^2 -
    \lambda)\frac{\partial\varphi} {\partial y^1}
    \frac{\partial\varphi}{\partial y^2} -
    \lambda\;\varphi\frac{\partial^2\varphi}{\partial y^1\partial y^2}
  \end{alignat*}
  are linearly independent. According to Lemma \ref{expr} this is
  equivalent to the linear independence of the functions
  \begin{displaymath}
    1,\qquad \!(\cos t + \dot r \sin t)\,e^{-r },
    \quad \! (\sin t - \dot
    r \cos t)\,e^{-r},
  \end{displaymath}
  \begin{displaymath}
    (2c^2 - \lambda)(\cos t + \dot r \sin t)(\sin t - \dot r \cos
    t)\,e^{-2r} - \lambda (\ddot r - \dot r^2 - 1)\,e^{-2r}\sin t \cos
    t.
  \end{displaymath}
  If $r = const$ then these functions are $1,\; \cos t\,e^{-r },\; \sin
  t\,e^{-r},\; 2c^2\cos t\sin t \,e^{-2r}$,\; hence the assertion follows 
  from Lemma \ref{non-linear}.  In the following we can assume that 
  $r(t) \neq const$. Let $t_0\in \mathbb R$
  such that $\dot r(t_0) = 0$ and $\kappa(t_0)\neq 0$. We rotate the
  coordinate system at the angle $- t_0$ with respect to the euclidean
  norm $\sqrt{(y^1)^2 + (y^2)^2}$, then we get in the new polar
  coordinate system that $\dot r(0) = 0$ and $\kappa(0)\neq 0$. Consider
  the linear combination
  \begin{equation}
    \label{eq:abcd}
    \begin{aligned}
      A & \!+\! B(\cos t + \dot r \sin t)e^{-r } + C(\sin t - \dot r
      \cos t)\,e^{-r} +
      \\
      & \!+\!D\big((2c^2 \!-\! \lambda)(\cos t \!+\! \dot r \sin t)(\sin
      t - \dot r \cos t)\,e^{-2r} -  \!\lambda (\ddot r \!-\!
      \dot r^2 \!-\! 1)\,e^{-2r}\sin t \cos t\big) \!=\! 0
    \end{aligned}
  \end{equation}
  with some constants $A,B,C,D$. Since the function $\varphi$ is
  absolutely homogeneous, the function $r(t)$ is $\pi$-periodic.
  Putting $t + \pi$ into $t$, the value of
  \[A + D(2c^2 - \lambda)(\cos t + \dot r \sin t)(\sin t - \dot r \cos
  t)\,e^{-2r} - \lambda (\ddot r - \dot r^2 - 1)\,e^{-2r}\sin t \cos t\]
  does not change, but the value of
  \[B(\cos t + \dot r \sin t)\,e^{-r } + C(\sin t - \dot r \cos
  t)\,e^{-r}\] changes sign. Since Lemma \ref{non-linear} implies that
  $(\cos t + \dot r \sin t)\,e^{-r }$ and $(\sin t - \dot r \cos
  t)\,e^{-r}$ are linearly independent, we have $B = C = 0$ and
  (\ref{eq:abcd}) becomes 
  \begin{equation}
    \label{eq:ad}
    A\,e^{2r} + D\Big((2c^2 - \lambda)\big[- \dot r \cos 2t + \frac12(1
    - \dot r^2)\sin 2t\big] - \frac{\lambda}{2} (\ddot r - \dot r^2 -
    1)\sin 2t\Big) = 0.
  \end{equation}
  Since  $\dot r(0) = 0$ at $t = 0$, we have $A = 0$. If $D\neq 0$ then
  (\ref{eq:ad}) gives
  \begin{displaymath}
    (2c^2 - \lambda)\big[- \dot r \cos 2t + \frac12(1 - \dot r^2)\sin 2t\big] 
    - \frac{\lambda}{2} (\ddot r - \dot r^2 - 1)\sin 2t = 0.
  \end{displaymath}
  By derivation and putting $t = 0$ we obtain
  \[(2c^2 - \lambda)\big[- \ddot r(0) + 1\big] - \lambda(\ddot
  r(0) - 1) = 2c^2(1 - \ddot r(0)) = 0.\] Using the relation (\ref{kappa})
  condition (B) gives $\kappa(0) = e^{r(0)}(1-\ddot r(0))\neq 0$, which is a
  contradiction.  Hence $D=0$ and the vector fields $\xi\big|_{x_0}$,
  $\n_1\xi|_{x_0}$, $\n_2\xi|_{x_0}$ and
  $\n_1\!\left(\n_2\xi\right)|_{x_0}$ are linearly independent. Using 
  Proposition \ref{sec:prop-1} we obtain the assertion.\\[1ex]
  Suppose now that the condition (C) holds.  Hence we have
  \begin{displaymath}
    \frac{\partial \F}{\partial y^1}(0,y) = 
    \frac{y^1}{|y|} \pm a^1,\qquad \frac{\partial \F}{\partial y^2}(0,y) =
    \frac{y^2}{|y|} \pm a^2,\qquad \frac{\partial^2 \F}{\partial y^1\partial
      y^2}(0,y) = - \frac{y^1y^2}{|y|^3},
  \end{displaymath}
  and
  \begin{equation}
    \label{Fexpr} 
    g_{12} = \left(\frac{y^1}{|y|} \pm 
      a^1\right)\left(\frac{y^2}{|y|} \pm a^2\right) - \left(1 \pm  
      \Big\langle a,\frac{y}{|y|} \Big\rangle\right)\frac{y^1y^2}{|y|^2}. 
  \end{equation}
  Similarly, we obtain from condition (C) that
  \begin{displaymath}
    \frac{\partial \P}{\partial y^1}(0,y) = \pm\frac{y^1}{|y|} - a^1,
    \quad
    \frac{\partial \P}{\partial y^2}(0,y) = \pm\frac{y^2}{|y|} - a^2.
  \end{displaymath}
  Using the expressions (\ref{indep}) we get that the vector fields 
  $\xi\big|_{x_0}$, $\n_1\xi|_{x_0}$, $\n_2\xi|_{x_0}$, $\n_1\left(\n_2\xi\right)|_{x_0}$ 
  are linearly independent if and only if the functions
  \begin{displaymath}
    1,\qquad \!\frac{\partial\P}{\partial y^1}\Big|_{(0,y)} = 
    \pm\frac{y^1}{|y|} - a^1,\qquad  \! 
    \frac{\partial\P}{\partial y^2}\Big|_{(0,y)} = \pm\frac{y^2}{|y|} - a^2
  \end{displaymath}
  and
  \begin{alignat*}{1}
    2\frac{\partial \P} {\partial y^1} \frac{\partial \P}{\partial y^2}
    \!-\!  \lambda\;g_{12}\Big|_{(0,y)} \!\!\!\!=
    \mp \Big \langle \! a,\frac{y}{|y|} \Big\rangle\frac{y^1y^2}{|y|^2}
    \!+\! (1 \!-\! \lambda)\frac{y^1y^2}{|y|^2} \mp (2 \!+\! \lambda)
    \left(\!a_2 \frac{y^1}{|y|} \!+\! a_1 \frac{y^2}{|y|}\right) \!+\!
    (2 \!-\!  \lambda)a_1a_2
  \end{alignat*}
  are linearly independent.  Putting
  \begin{displaymath}
    \cos t = \frac{y^1}{|y|}, \quad \sin t = \frac{y^2}{|y|} 
  \end{displaymath}
  we obtain that this condition is true, since the trigonometric polynomials
  \[1,\quad \cos t,\quad \sin t, \quad (1 - \lambda)\cos t\sin t \mp(
  a_1\cos t + a_2 \sin t)\cos t\sin t \] 
  are linearly independent. Hence $\mathsf{Hol}_{x_0}(M)$ is 
  infinite dimensional.
\end{proof}

\section{Holonomy of projective Finsler manifolds of constant curvature}
\label{chap:4}

Now we will prove that the infinitesimal holonomy algebra of a totally geodesic 
submanifold of a Finsler manifold can be embedded into the infinitesimal holonomy 
algebra of the entire manifold. This result yields a lower estimate for the
 dimension of the holonomy group.

\subsubsection*{Totally geodesic and auto-parallel submanifolds} 

Let $(M,\S)$ be a spray manifold. A submanifold $\bar{M}$ is called
\emph{totally geodesic} if any geodesic of $(M,\S)$ which is tangent to
$\bar{M}$ at some point
is contained in $\bar{M}$. \\[1ex]
A totally geodesic submanifold $\bar{M}$ of the spray manifold $(M,\S)$
is called \emph{auto-parallel} if the homogeneous (nonlinear) parallel
translations $\tau_{c}:T_{c(0)}M\to T_{c(1)}M$ along curves in the
submanifold $\bar{M}$ leave invariant the tangent bundle $T\bar{M}$ and
for every $\xi\in\hat{\mathfrak X}^{\infty}(T\bar{M})$ the horizontal
covariant
derivative $\nabla_X\xi$ belongs to $\hat{\mathfrak X}^{\infty}(T\bar{M})$.\\[1ex]
Let $X, Y\in T_xM$ be tangent vectors at $x\in M$ and let $K$ denote the
curvature tensor of $(M,\S)$, (cf. equation (\ref{spraycurv})). The
mapping $y\to K(X,Y)(x,y):T_x M\mapsto T_xM$ is called \emph{curvature
  vector field at $x$} of the spray manifold $(M,\S)$.
\begin{lemma} \label{extend}
  Let $\bar{M}$ be a totally geodesic submanifold in a spray manifold $(M,\S)$. 
  The following assertions hold: \vspace{-5pt}
  \begin{enumerate} 
  \item[(a)] the spray $\S$ induces a spray $\bar{\S}$ on the
    submanifold $\bar{M}$, \vspace{-5pt}
  \item[(b)] $\bar{M}$ is an auto-parallel submanifold, \vspace{-5pt}
  \item[(c)] the curvature vector fields at any point of $\bar{M}$ can
    be extended to a curvature vector field of $M$. 
  \end{enumerate} 
\end{lemma}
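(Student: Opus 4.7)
The plan is to work in local coordinates $(x^a, x^\alpha)$ adapted to $\bar M$, so that $\bar M = \{x^\alpha = 0\}$; here Latin indices $a,b,c,\ldots$ label tangent directions to $\bar M$ and Greek indices $\alpha,\beta,\gamma,\ldots$ the normal ones. The whole lemma then reduces to the single identity
\begin{equation}
  \Gamma^\alpha(x^a, 0, y^a, 0) \equiv 0 \qquad \text{for all } (x^a) \text{ and } (y^a) \tag{$\ast$}
\end{equation}
together with its successive $y$-derivatives. To establish (a), let $c(t)$ be the geodesic with $c^\alpha(0) = 0$ and $\dot c^\alpha(0) = 0$: total geodesy forces $c^\alpha \equiv 0$, so $\ddot c^\alpha(0) = 0$, and substitution into (\ref{geod}) yields $(\ast)$. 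Setting $\bar\Gamma^a(x^b, y^b) := \Gamma^a(x^b, 0, y^b, 0)$, the $2$-homogeneity of $\Gamma^a$ in $y$ passes to $\bar\Gamma^a$, so $\bar{\S} := y^a \frac{\partial}{\partial x^a} - 2\bar\Gamma^a\frac{\partial}{\partial y^a}$ is a spray on $\bar M$; by $(\ast)$ the original spray $\S$ is tangent to $\hat T\bar M \subset \hat TM$ along $\hat T\bar M$ and restricts there to $\bar{\S}$.

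For (b), I would differentiate $(\ast)$ in $y^b$ to obtain $\Gamma^\alpha_b(x^c, 0, y^c, 0) = 0$, and a second time to obtain $\Gamma^\alpha_{bc}(x^d, 0, y^d, 0) = 0$. For parallel transport along a curve $c(t)\subset\bar M$ (along which $\dot c^\alpha \equiv 0$), the $\alpha$-component of (\ref{eq:D}) becomes $\dot X^\alpha + \Gamma^\alpha_a(c, X)\dot c^a = 0$, which is solved by $X^\alpha \equiv 0$; uniqueness of solutions to the full ODE system then forces any parallel vector field with $X^\alpha(0) = 0$ to satisfy $X^\alpha \equiv 0$, proving invariance of $T\bar M$. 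For the horizontal covariant derivative, the $\alpha$-component of (\ref{covder}) applied to $X \in \mathfrak X^\infty(\bar M)$ (so $X^\alpha = 0$) and $\xi \in \hat{\mathfrak X}^\infty(T\bar M)$ (so $\xi^\alpha \equiv 0$) reads
\begin{displaymath}
  \Big(\frac{\partial \xi^\alpha}{\partial x^a} - \Gamma_a^k \frac{\partial \xi^\alpha}{\partial y^k} + \Gamma^\alpha_{ak}\xi^k\Big)X^a,
\end{displaymath}
and each summand vanishes on $\hat T\bar M$: the first two because $\xi^\alpha\equiv 0$ there, the last because $\xi^k = 0$ for $k$ normal while $\Gamma^\alpha_{ab}|_{\hat T\bar M} = 0$ for $k = b$ tangent.

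For (c), I substitute the vanishing relations obtained in the previous paragraph into the curvature formula (\ref{spraycurv}). The terms $\partial \Gamma^\gamma_a/\partial x^b$ and $\partial \Gamma^\gamma_b/\partial x^a$ vanish on $\hat T\bar M$ because $\Gamma^\gamma_a$ and $\Gamma^\gamma_b$ are identically zero there, and each quadratic term $\Gamma^m_a\Gamma^\gamma_{bm}$ vanishes whether $m$ is tangent (since $\Gamma^\gamma_{bm}|_{\hat T\bar M} = 0$) or normal (since $\Gamma^m_a|_{\hat T\bar M} = 0$). Hence $K^\gamma_{ab}(x^d, 0, y^d, 0) = 0$ and, by the same substitution, $K^a_{bc}(x^d, 0, y^d, 0) = \bar K^a_{bc}(x^d, y^d)$, where $\bar K$ is the curvature of $\bar{\S}$. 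Given $\bar X, \bar Y \in T_{\bar x}\bar M$, the induced curvature vector field $y \mapsto \bar K(\bar X, \bar Y)(\bar x, y)$ on $T_{\bar x}\bar M$ is therefore the restriction to $T_{\bar x}\bar M$ of the curvature vector field $y \mapsto K(\bar X, \bar Y)(\bar x, y)$ of $M$, with $\bar X, \bar Y$ viewed as elements of $T_{\bar x}M$ via the inclusion $T_{\bar x}\bar M \subset T_{\bar x}M$.

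The only genuinely geometric step is the derivation of $(\ast)$ from the totally geodesic hypothesis; everything after that is $y$-differentiation of $(\ast)$ and substitution into the explicit formulas (\ref{eq:D}), (\ref{covder}) and (\ref{spraycurv}). The main practical difficulty is book-keeping: one has to verify that every ``normal component of a tangent object'' term in these three formulas is killed either by the vanishing of $\xi^\alpha$ in (b), or by one of the restriction identities $\Gamma^\alpha_j|_{\hat T\bar M} = 0$ and $\Gamma^\alpha_{jk}|_{\hat T\bar M} = 0$ obtained by differentiating $(\ast)$, and this term-by-term checking in the curvature formula is where an index slip is most likely.
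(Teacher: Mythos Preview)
Your proof is correct and follows essentially the same route as the paper: adapted coordinates, the key identity $\Gamma^{\text{normal}}(x^{\text{tan}},0,y^{\text{tan}},0)=0$ extracted from the geodesic equation via total geodesy, its successive $y$-derivatives giving $\Gamma^{\text{normal}}_{\text{tan}}=0$ and $\Gamma^{\text{normal}}_{\text{tan},\text{tan}}=0$, and then direct substitution into the parallel-transport, horizontal-covariant-derivative and curvature formulas. The only differences are cosmetic (your Latin/Greek index convention is the reverse of the paper's) and that you spell out the ODE-uniqueness step for invariance of $T\bar M$ and the term-by-term curvature check more explicitly than the paper does.
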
 
\begin{proof}  
  Assume that the manifolds $\bar{M}$ and $M$ are $k$, respectively
  $n=k+p$ dimensional.  Let $(x^1,\dots ,x^k,x^{k+1}, \dots,x^n)$ be an
  adapted coordinate system, i. e.  the submanifold $\bar{M}$ is locally
  given by the equations $x^{k+1}=\dots =x^n=0$.  We denote the indices
  running on the values $\{1,\dots ,k\}$ or $\{k + 1,\dots ,n\}$ by
  $\alpha, \beta, \gamma$ or $\sigma, \tau$, respectively. The
  differential equation (\ref{geod}) of geodesics yields that the
  geodesic coefficients $\Gamma^{\sigma}(x,y)$ satisfy
  \[\Gamma^{\sigma}(x^1,\dots ,x^k,0,\dots ,0;y^1,\dots ,y^k,0,\dots ,0) = 0\] 
  identically, hence their derivatives with respect to $y^1,\;\dots
  ,\;y^k$ are also vanishing. It follows that $\Gamma^{\sigma}_{\alpha}
  = 0$ and $\Gamma^{\sigma}_{\alpha\;\beta} = 0$ at any $(x^1,\dots
  ,x^k,0, \dots ,0;y^1,\dots ,y^k,0,\dots ,0)$. Hence the induced spray
  $\bar{\S}$ on $\bar{M}$ is defined by the geodesic coefficients
  \[\bar{\Gamma}^{\beta}(x^1,\dots ,x^k;y^1,\dots ,y^k) = 
  \Gamma^{\beta}(x^1,\dots ,x^k,0,\dots ,0;y^1,\dots ,y^k,0,\dots ,0).\]  
  The homogeneous (nonlinear) parallel translation $\tau_{c}:T_{c(0)}M\to T_{c(1)}M$ along 
  curves in the submanifold $\bar{M}$ and the horizontal covariant derivative on $\bar{M}$
  with respect to the spray $\S$ coincide with the translation and the horizontal covariant 
  derivative on $\bar{M}$ with respect to the spray $\bar{\S}$. Hence the first two 
  assertions are true. \\ 
  If $y, X, Y\in T_x\bar{M}$ are tangent vectors at $x\in \bar{M}$ 
  then  $K(X,Y)(x,y)$ can be expressed by 
  \[\left(\frac{\partial \Gamma^i_{\alpha}(x,y)}{\partial x^{\beta}} -
  \frac{\partial \Gamma^i_{\beta}(x,y)}{\partial x^{\alpha}} + \Gamma_{\alpha}^m(x,y)
  \Gamma^i_{{\beta} m}(x,y) - \Gamma_{\beta}^m(x,y)
  \Gamma^i_{{\alpha} m}(x,y)\right)X^{\alpha}Y^{\beta}\frac{\partial}{\partial x^i}. \]  
  Since $\Gamma^{\sigma}_{\alpha} = 0$ and   $\Gamma^{\sigma}_{\alpha\;\beta} = 0$ at any $(x^1,\dots ,x^k,0,
  \dots ,0;y^1,\dots ,y^k,0,\dots ,0)$ we have 
   \[\frac{\partial \Gamma^{\sigma}_{\alpha}}{\partial x^{\beta}} -
  \frac{\partial \Gamma^{\sigma}_{\beta}}{\partial x^{\alpha}} + 
  \Gamma_{\alpha}^\tau
  \Gamma^{\sigma}_{{\beta}\tau} - \Gamma_{\beta}^\tau
  \Gamma^{\sigma}_{{\alpha} \tau}+ 
  \Gamma_{\alpha}^\gamma
  \Gamma^{\sigma}_{{\beta}\gamma} - \Gamma_{\beta}^\gamma
  \Gamma^{\sigma}_{{\alpha} \gamma} = 0\]
  at $(x^1,\dots ,x^k,0,\dots ,0;y^1,\dots ,y^k,0,\dots ,0)$.
  Hence the curvature tensors $\bar{K}$ corresponding to the spray $\bar{\S}$ and $K$ 
  corresponding to the spray $\S$ satisfy $\bar{K}(X,Y)(x,y) = K(X,Y)(x,y)$ if 
  $x\in\bar M$ and  $y, X, Y\in T_x\bar{M}$. It follows that for any given 
  $X, Y\in T_x\bar{M}$ the curvature vector field $\bar{\xi}(y) = \bar{K}(X,Y)(x,y)$ 
  at $x\in\bar M$ defined on $T_x\bar{M}$ can be extended to the curvature vector 
  field $\xi(y) = K(X,Y)(x,y)$ at $x\in\bar M$ defined on $T_xM$.
\end{proof}

\begin{theorem}
  \label{simultan}
  Let $\bar{M}$ be a totally geodesic 2-dimensional submanifold of a Finsler 
  manifold $(M, \F)$ such that the infinitesimal holonomy algebra 
  $\mathfrak{hol}^{*}_x(\bar{M})$ of $\bar{M}$ is infinite dimensional. Then the 
  holonomy group $\mathsf{Hol}_x(M)$ is infinite dimensional.
\end{theorem}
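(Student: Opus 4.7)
The plan is to exhibit an injective $\mathbb R$-linear map $\mathfrak{hol}^{*}_{x}(\bar M)\hookrightarrow\mathfrak{hol}^{*}_{x}(M)$; by the tangency principle recalled just before Theorem \ref{prop:proj_flat_const_curv}, infinite dimensionality of $\mathfrak{hol}^{*}_{x}(M)$ then forces the holonomy group $\mathsf{Hol}_{x}(M)$ to be infinite dimensional. To build the embedding I will show that every element $\bar\xi\in\mathfrak{hol}^{*}_{x}(\bar M)$ admits a lift $\xi\in\mathfrak{hol}^{*}_{x}(M)$ whose values along $\I_{x}\bar M$ (under the identification (\ref{iden})) lie in $T_{x}\bar M\subset T_{x}M$, and whose restriction to $\I_{x}\bar M$ equals $\bar\xi$. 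Injectivity of the lifting is then automatic, since a lift that vanishes on $\I_{x}M$ vanishes a fortiori on $\I_{x}\bar M$.

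Existence of a lift is established by induction on the inductive construction of $\mathfrak{hol}^{*}(\bar M)$ from curvature vector fields, Lie brackets and horizontal Berwald covariant derivatives. For curvature vector fields, Lemma \ref{extend}(c) provides directly that a curvature vector field $\bar R(X,Y)(x,y)$ with $X,Y\in T_{x}\bar M$ is the restriction to $\I_{x}\bar M$ of the curvature vector field $R(X,Y)(x,y)$ of $M$, whose values on $\I_{x}\bar M$ remain in $T_{x}\bar M$. For Lie brackets, if two vertical vector fields on $\I M$ take values in $T_{x}\bar M$ along $\I_{x}\bar M$, then so does their bracket, and its restriction equals the bracket of the restrictions, by the standard behaviour of Lie brackets of vector fields tangent to a submanifold. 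For $\nabla_{\bar X}\bar\xi$ with $\bar X\in\mathfrak X^{\infty}(\bar M)$, I choose an extension $X\in\mathfrak X^{\infty}(M)$ tangent to $\bar M$ along $\bar M$ in adapted coordinates; the auto-parallel property Lemma \ref{extend}(b) then guarantees that $\nabla_{X}\xi$ is again tangent to $T\bar M$ along $\I_{x}\bar M$, with restriction $\nabla_{\bar X}\bar\xi$.

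The only step that requires a genuine computation is the covariant-derivative step. In the adapted coordinates of Lemma \ref{extend}, the identities $\Gamma^{\sigma}_{\alpha}=0$ and $\Gamma^{\sigma}_{\alpha\beta}=0$ that hold along $\bar M$ for $y\in T\bar M$, combined with $X^{\sigma}=0$ along $\bar M$ and $\xi^{\sigma}=0$ along $\bar M$ for $y\in T\bar M$ (the latter because $\xi$ takes values in $T_{x}\bar M$), make every transverse component of the formula (\ref{covder}) for $\nabla_{X}\xi$ vanish at points of $\bar M$ with $y\in T_{x}\bar M$. The tangential components reduce, by the same identities, to the analogous formula for $\nabla_{\bar X}\bar\xi$ relative to the induced spray $\bar{\mathcal S}$. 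Hence the lift of $\nabla_{\bar X}\bar\xi$ is $\nabla_{X}\xi$, as required.

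The main obstacle is precisely this verification: one must check not merely that auto-parallelism preserves tangency (which is its definition), but that the covariant derivative on $M$ of a lift restricts to the intrinsic covariant derivative on $\bar M$ of the original section. Once these three compatibilities are in place, induction produces the injective linear lift $\bar\xi\mapsto\xi$; linearly independent families in $\mathfrak{hol}^{*}_{x}(\bar M)$ yield linearly independent families in $\mathfrak{hol}^{*}_{x}(M)$, and the theorem follows from the hypothesis $\dim\mathfrak{hol}^{*}_{x}(\bar M)=\infty$ and the tangency result of \cite{MuNa}.
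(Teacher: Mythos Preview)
Your proposal is correct and follows essentially the same route as the paper: use Lemma~\ref{extend} to extend curvature vector fields, then inductively extend Lie brackets and horizontal Berwald covariant derivatives to obtain an embedding $\mathfrak{hol}^{*}_{x}(\bar M)\hookrightarrow\mathfrak{hol}^{*}_{x}(M)$, and conclude via the tangency result of \cite{MuNa}. Your treatment is in fact more explicit than the paper's in two places---you spell out the injectivity argument (restriction to $\I_x\bar M$ recovers $\bar\xi$) and you verify in adapted coordinates that the transverse components of $\nabla_X\xi$ vanish along $T\bar M$, whereas the paper simply asserts the compatibility of the covariant derivatives---but the underlying strategy is identical.
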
 

\begin{proof} 
  According to Lemma \ref{extend} any curvature vector field of
  $\bar{M}$ at $x\in \bar{M}\subset M$ defined on $\I_x \bar{M}$ can be
  extended to a curvature vector field on the indicatrix $\I_x M$. Hence
  the curvature algebra $\mathfrak{R}_x(\bar{M})$ of the submanifold
  $\bar{M}$ can be embedded into the curvature algebra
  $\mathfrak{R}_x(M)$ of the manifold $(M, \F)$. Assume that $\bar{\xi
  }$ is a vector field belonging to the infinitesimal holonomy algebra
  $\mathfrak{hol}^{*}_x(\bar{M})$ which can be extended to the vector
  field $\xi$ belonging to the infinitesimal holonomy algebra
  $\mathfrak{hol}^{*}_x(M)$. Any a vector field $\bar{X}\!\in\!
  {\mathfrak X}^{\infty}(\bar{M})$ can be extended to a vector field
  $X\!\in\!  {\mathfrak X}^{\infty}(M)$, hence the Berwald horizontal
  covariant derivative along $\bar{X}\in {\mathfrak
    X}^{\infty}(\bar{M})$ of $\bar{\xi }$ can be extended to the Berwald
  horizontal covariant derivative along $X\in {\mathfrak X}^{\infty}(M)$
  of the vector field $\xi$. It follows that the infinitesimal holonomy
  algebra $\mathfrak{hol}^{*}_x(\bar{M})$ of the submanifold $\bar{M}$
  can be embedded into the infinitesimal holonomy algebra
  $\mathfrak{hol}^{*}_x(M)$ of the Finsler manifold
  $(M,\F)$. Consequently, $\mathfrak{hol}^{*}_x(M)$ is infinite
  dimensional and hence the holonomy group $\mathsf{Hol}_x(M)$ is an
  infinite dimensional subgroup of ${\mathsf{Diff}^{\infty}}({\I}_xM)$.
\end{proof}
This result can be applied to locally projectively flat Finsler manifolds, as they 
have for each tangent $2$-plane a totally geodesic submanifold which is tangent to 
this $2$-plane.
\begin{corollary} 
  \label{idhg} 
  If a locally projectively flat Finsler manifold has a   2-dimensional totally 
  geodesic submanifold satisfying one of the conditions of Theorem 
  \ref{prop:proj_flat_const_curv}, then its holonomy group is infinite dimensional.
\end{corollary}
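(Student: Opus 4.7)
The plan is to chain Theorem \ref{prop:proj_flat_const_curv} and Theorem \ref{simultan}: first apply the former to the 2-dimensional submanifold $\bar{M}$ to guarantee that its infinitesimal holonomy algebra is infinite dimensional, then apply the latter to lift this conclusion to the ambient manifold $M$.

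The first step is to verify that $\bar{M}$ itself is a projectively flat Finsler surface of non-zero constant curvature, so that Theorem \ref{prop:proj_flat_const_curv} is actually applicable. I would proceed as follows. Choose a local coordinate chart on $M$ around $x_0$ that is projectively related to a Euclidean chart. Restricting to $\bar{M}$ we obtain a 2-dimensional coordinate chart in which the geodesics of $M$ lying in $\bar{M}$ appear as straight lines; by the totally geodesic hypothesis these are precisely the geodesics of the induced Finsler structure $\F|_{\bar{M}}$ (this induced spray being the one supplied by Lemma \ref{extend}(a)), so $\bar{M}$ is locally projectively flat. Moreover, Lemma \ref{extend}(c) gives that the curvature tensor of $\bar{M}$ coincides with the restriction of the ambient curvature tensor to tangent vectors in $T\bar{M}$, so $\bar{M}$ inherits the non-zero constant flag curvature of $M$. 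The Finsler function and the projective factor on $T_{x_0}\bar{M}$ are just the restrictions of those of $(M,\F)$, so the condition (A), (B), or (C) assumed for $\bar{M}$ is precisely the hypothesis required by Theorem \ref{prop:proj_flat_const_curv}.

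With this in place, the proof of Theorem \ref{prop:proj_flat_const_curv} produces four linearly independent simultaneously non-vanishing vector fields $\xi|_{x_0}$, $\n_1\xi|_{x_0}$, $\n_2\xi|_{x_0}$, $\n_1(\n_2\xi)|_{x_0}$ in $\mathfrak{hol}^{*}_{x_0}(\bar{M})$, and Proposition \ref{sec:prop-1} (which rests on Lie's classification of finite-dimensional transformation groups on a $1$-manifold) then forces $\mathfrak{hol}^{*}_{x_0}(\bar{M})$ to be infinite dimensional. Applying Theorem \ref{simultan} to the 2-dimensional totally geodesic submanifold $\bar{M}\subset M$ now yields that $\mathsf{Hol}_{x_0}(M)$ is itself infinite dimensional.

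The main conceptual obstacle is really the verification that $\bar{M}$ inherits the full package of hypotheses of Theorem \ref{prop:proj_flat_const_curv} (projective flatness, non-zero constant flag curvature, and the relevant behavior of $\F$ and $\P$ at $x_0$); this is not deep but must be traced carefully through Lemma \ref{extend}. Once that inheritance is established, the argument is a formal chaining of the two preceding theorems and requires no new computation.
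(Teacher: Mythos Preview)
Your proposal is correct and follows the same approach as the paper: the corollary is stated there without proof, as it is meant to follow immediately by applying Theorem~\ref{prop:proj_flat_const_curv} to the surface $\bar M$ and then invoking Theorem~\ref{simultan}. Your careful verification that $\bar M$ inherits projective flatness, non-zero constant curvature, and the same $\F$ and $\P$ at $x_0$ (via Lemma~\ref{extend}) fills in exactly the routine checks the paper leaves implicit, and your observation that one needs the \emph{infinitesimal holonomy algebra} of $\bar M$ to be infinite dimensional (extracted from the proof of Theorem~\ref{prop:proj_flat_const_curv} via Proposition~\ref{sec:prop-1}) rather than merely the holonomy group is precisely what Theorem~\ref{simultan} requires.
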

According to equations (\ref{projective1}) and (\ref{projective2}) the projectively 
flat Randers manifolds of non-zero constant curvature satisfy condition (C) of 
Theorem \ref{prop:proj_flat_const_curv}. We can apply Corollary \ref{idhg} to 
these manifolds and we get the following 

\begin{theorem}
  The holonomy group of any projectively flat Randers manifolds of non-zero 
  constant flag curvature is infinite dimensional.
\end{theorem}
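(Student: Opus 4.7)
The plan is to reduce the theorem directly to Corollary \ref{idhg} via condition (C) of Theorem \ref{prop:proj_flat_const_curv}, using Shen's classification of projectively flat Randers manifolds of non-zero constant flag curvature that was recalled just after Remark \ref{linear}.

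First I would invoke Shen's classification: up to a constant rescaling (which does not affect the holonomy group), any such manifold has curvature $-\tfrac14$ and is isometric to the explicit Randers manifold on the unit ball $\mathbb{D}^n$ whose Finsler function is given by (\ref{projective1}) with some constant vector $a\in\mathbb{R}^n$, $|a|<1$. The projective factor of this metric was computed in (\ref{projective2}). This places us in an explicit projectively related Euclidean coordinate system on $\mathbb{D}^n$.

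Next I would evaluate $\mathcal{F}$ and $\mathcal{P}$ at the center $x_0=0$ of the ball. Substituting $x=0$ into (\ref{projective1}) collapses the fraction $\sqrt{|y|^2-(|x|^2|y|^2-\langle x,y\rangle^2)}/(1-|x|^2)$ to $|y|$, the term $\langle x,y\rangle/(1-|x|^2)$ vanishes, and $\langle a,y\rangle/(1+\langle a,x\rangle)$ becomes $\langle a,y\rangle$, yielding
\[
\mathcal{F}(0,y) \;=\; |y| \pm \langle a,y\rangle.
\]
The same substitution in (\ref{projective2}) gives
\[
\mathcal{P}(0,y) \;=\; \tfrac{1}{2}\!\left(\pm |y| - \langle a,y\rangle\right).
\]
These are exactly the formulas (\ref{rand}) appearing in condition (C) of Theorem \ref{prop:proj_flat_const_curv}, so condition (C) is satisfied at $x_0=0$.

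Finally I would exploit projective flatness to produce a $2$-dimensional totally geodesic submanifold on which (C) holds. As noted after (\ref{eq:proj_flat_G_i}), in a projectively flat coordinate system any $2$-plane through the origin is a totally geodesic submanifold of $(M,\mathcal{F})$; restricting $\mathcal{F}$ and $\mathcal{P}$ to such a $2$-plane preserves the formulas above (with $a$ replaced by its orthogonal projection onto the plane). Thus we obtain a $2$-dimensional totally geodesic submanifold through $x_0$ satisfying condition (C) of Theorem \ref{prop:proj_flat_const_curv}. Corollary \ref{idhg} then yields that $\mathsf{Hol}_{x_0}(M)$ is infinite dimensional, and since holonomy groups at different points are isomorphic, the assertion follows. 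The only non-routine step is the evaluation of $\mathcal{F}(0,y)$ and $\mathcal{P}(0,y)$ matching (\ref{rand}) on the nose, which is immediate from the explicit formulas; no genuine obstacle arises, since all the analytic work has already been carried out in Theorem \ref{prop:proj_flat_const_curv}(C) and Corollary \ref{idhg}.
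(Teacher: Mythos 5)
Your proposal is correct and follows exactly the paper's own route: the paper's proof consists precisely of observing that evaluating (\ref{projective1}) and (\ref{projective2}) at $x=0$ yields condition (C) of Theorem \ref{prop:proj_flat_const_curv}, and then applying Corollary \ref{idhg}. You have merely written out the (correct) substitution and the restriction to a totally geodesic $2$-plane that the paper leaves implicit.
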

R. Bryant in [Br1], [Br2] introduced and studied complete Finsler
metrics of positive curvature on $S^2$. He proved that there exists
exactly a $2$-parameter family of Finsler metrics on $S^2$ with
curvature = $1$ with great circles as geodesics. Z. Shen generalized a
$1$-parameter family of complete Bryant metrics to $S^n$ satisfying
\begin{equation}\label{BSh}
\F(0,y) = |y| \,\cos \alpha, \quad \P(0,y) = |y|\,\sin \alpha 
\end{equation}
with $|\alpha|<\frac{\pi}{2}$ in a coordinate neighbourhood centered at
$0\in \mathbb R^n$, (cf. Example 7.1. in \cite{Shen4} and Example 8.2.9
in
\cite{ChSh}).\\
We investigate the holonomy groups of two families of metrics,
containing the $1$-parameter family of complete Bryant-Shen metrics
(\ref{BSh}). The first family in the following theorem is defined by
condition (A), which is motivated by Theorem
8.2.3 in \cite{ChSh}. There is given the following construction:\\
If $\psi = \psi(y)$ is an arbitrary Minkowski norm on $\mathbb R^n$ and
$\varphi = \varphi(y)$ is an arbitrary positively 1-homogeneous function
on $\mathbb R^n$, then there exists a projectively flat Finsler metric
$\F$ of constant flag curvature $-1$, defined on a neighbourhood of the
origin, such that $\F$ and its projective factor $\P$ satisfy
$\F(0,y) = \psi(y)$ and $\P(0,y) = \varphi(y)$. \\
Condition (B) in the next theorem is confirmed by Example 7 in
\cite{Shen4}, p.~1726, where it is proved that for an arbitrary given
Minkowski norm $\varphi$ and $|\vartheta|<\frac{\pi}{2}$ there exists a
projectively flat Finsler function $\F$ of constant curvature $= 1$
defined on a neighbourhood of $0 \in \mathbb R^n$, such that
\begin{displaymath}
  \F(0,y)= \varphi(y)\,\cos \vartheta   \quad \mathrm{and}
  \quad   \P(0,y) = \varphi(y)\,\sin\vartheta.
\end{displaymath}
Conditions (A) and (B) in Theorem \ref{prop:proj_flat_const_curv}
together with Corollary \ref{idhg} yield the following
\begin{theorem}
 Let $(M, \F)$ be a projectively flat Finsler manifold of non-zero
  constant curvature. Assume that there exists a point $x_0\in M$ and a
  $2$-dimensional totally geodesic submanifold $\bar{M}$ through $x_0$ 
  such that one of the following conditions holds \vspace{-5pt}
  \begin{enumerate}
  \item[\emph{(A)}] $\F$ induces a scalar product on $T_{x_0}\bar{M}$,
    and the projective factor $\P$ on $T_{x_0}\bar{M}$ is a strongly
    convex positively 1-homogeneous function, \vspace{-5pt}
  \item[\emph{(B)}] $\F(x_0,y)$ on $T_{x_0}\bar{M}$ is a strongly convex
    absolutely 1-homogeneous function on $T_{x_0}M$, and the projective
    factor $\P({x_0},y)$ on $T_{x_0}\bar{M}$ satisfies
    $\P({x_0},y)=c\!\cdot\!\F(x_0,y)$ with $0\neq c\in\mathbb
    R$. \vspace{-5pt}
  \end{enumerate} 
  Then the holonomy group $\mathsf{Hol}_{x_0}(M)$ is infinite  dimensional.
 \end{theorem}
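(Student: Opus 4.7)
The plan is to reduce to the Finsler-surface case already handled by Theorem \ref{prop:proj_flat_const_curv}, and then transport the conclusion back to $M$ via the submanifold embedding of Theorem \ref{simultan}. Concretely, I would identify $\bar{M}$ equipped with the restricted Finsler function $\bar{\F}:=\F|_{T\bar{M}}$ as a projectively flat Finsler surface of non-zero constant flag curvature satisfying condition (A) or (B) of Theorem \ref{prop:proj_flat_const_curv}, and then invoke Corollary \ref{idhg}.

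The verification has three ingredients. First, by Lemma \ref{extend}(a), the spray $\S$ of $M$ restricts to a spray $\bar{\S}$ on $\bar{M}$; since $(M,\F)$ is projectively flat in a chart around $x_0$, the $\bar{\S}$-geodesics are straight lines in the induced $2$-plane chart, so $(\bar{M},\bar{\F})$ is projectively flat. Second, the decomposition $G^i=\P y^i$ of (\ref{eq:proj_flat_G_i}) restricts on tangent indices to $\bar{G}^\alpha=\bar{\P} y^\alpha$ with $\bar{\P}=\P|_{T\bar{M}}$, owing to the vanishing of the transverse geodesic coefficients on $\bar{M}$ established in the proof of Lemma \ref{extend}; in particular $\bar{\P}(x_0,\cdot)=\P(x_0,\cdot)|_{T_{x_0}\bar{M}}$. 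Third, formula (\ref{gorb}) for constant flag curvature $\lambda$ restricts tangentially to the same expression with the induced metric tensor, so $(\bar{M},\bar{\F})$ has constant flag curvature $\lambda$ as well.

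With these identifications, hypotheses (A) and (B) of the present theorem on $T_{x_0}\bar{M}$ become, respectively, hypotheses (A) and (B) of Theorem \ref{prop:proj_flat_const_curv} for the Finsler surface $(\bar{M},\bar{\F})$ at $x_0$. Corollary \ref{idhg} then yields that $\mathsf{Hol}_{x_0}(M)$ is infinite dimensional. The substantive work — the polar-coordinate computation producing four linearly independent vector fields in $\mathfrak{hol}^*_{x_0}(\bar{M})$ and the embedding $\mathfrak{hol}^*_{x_0}(\bar{M})\hookrightarrow\mathfrak{hol}^*_{x_0}(M)$ — has already been carried out in Theorems \ref{prop:proj_flat_const_curv} and \ref{simultan}. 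The only remaining step is the routine restriction check of the preceding paragraph, so there is no real obstacle here; the theorem is essentially a packaging of Corollary \ref{idhg} tailored to the Bryant–Shen-type constructions mentioned before its statement.
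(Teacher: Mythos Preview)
Your proposal is correct and matches the paper's approach exactly: the paper offers no separate proof, stating only that ``Conditions (A) and (B) in Theorem \ref{prop:proj_flat_const_curv} together with Corollary \ref{idhg} yield'' the theorem. Your write-up simply spells out the routine restriction checks (that $(\bar M,\bar\F)$ inherits projective flatness, the same projective factor, and the same constant curvature) which the paper leaves implicit.
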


\end{document}